\documentclass[preprint,12pt,authoryear]{elsarticle}

\usepackage{blindtext}

\usepackage[utf8]{inputenc}
\usepackage[T1]{fontenc}
\usepackage{lmodern}
\usepackage{amsmath, amsthm, amssymb}
\usepackage{paralist}
\usepackage{float}
\usepackage{accents}
\usepackage{color}
\usepackage{bm}
\usepackage{lscape}
\usepackage{booktabs}
\usepackage{dsfont}
\usepackage{hyperref}
\usepackage{multirow}

\newcommand{\eps}{{\varepsilon}}
\renewcommand{\phi}{\varphi}
\newcommand{\R}{\mathbb{R}}
\newcommand{\Z}{\mathbb{Z}}
\newcommand{\N}{\mathbb{N}}
\newcommand{\pr}{\mathbb{P}}       
\newcommand{\ex}{\mathbb{E}}

\newcommand{\Fc}{\mathcal{F}}
\newcommand{\Ac}{\mathcal{A}}
\newcommand{\Gc}{\mathcal{G}}

\newcommand{\Xb}{\mathbf{X}}

\newcommand{\diff}{{\,\mathrm{d}}}

\newtheorem{theorem}{Theorem}
\newdefinition{remark}{Remark}
\newtheorem{assumption}{Assumption}
\newtheorem{proposition}{Proposition}
\newtheorem{example}{Example}

\begin{document}

\begin{frontmatter}

\title{A Note on Physical Dependence and Mixing Conditions for Triangular Arrays}

\author{Florian Heinrichs}
\ead{f.heinrichs@fh-aachen.de}

\affiliation{organization={Department of Medical Engineering and Technomathematics, FH Aachen - University of Applied Sciences},
            addressline={Heinrich-Mußmann-Straße 1}, 
            city={Jülich},
            postcode={52428}, 
            country={Germany}}

\begin{abstract}
	Under mild structural assumptions and regularity conditions on the marginal and conditional densities, an explicit bound on the $\beta$-mixing coefficients in terms of the physical dependence measure is provided. Consequently, weak physical dependence implies $\beta$-mixing and strong mixing for triangular arrays, complementing Hill (2025), who proved the converse implication under moment assumptions. 
\end{abstract}

\begin{keyword}
Weak Dependence \sep Strong Mixing \sep $\beta$-Mixing \sep Physical Dependence \sep Triangular Arrays \sep Local Stationarity

\MSC[2020] 62M10 \sep 60G10 \sep 60F05
\end{keyword}

\end{frontmatter}

%% Add \usepackage{lineno} before \begin{document} and uncomment 
%% following line to enable line numbers
%% \linenumbers

% !TeX spellcheck = en_US
%% ====================
\section{Introduction} \label{sec:intro}
%% ====================

When moving beyond the assumption of independence, it is crucial to control the dependence structure in order to derive meaningful results, such as central limit theorems. A variety of dependence measures have been introduced, from mixing conditions \cite{rosenblatt1956,ibragimov1959,blum1963,philipp1969,kolmogorov1960,volkonskii1959} over cumulant conditions \cite{brillinger2001} to the physical dependence measure \cite{wu2005}. These measures of dependence have been generalized from (stationary) time series to triangular arrays. Most of the properties, such as covariance bounds, remain valid for triangular arrays.

The relations between different dependence measures have been extensively studied, in particular, the connections between mixing conditions, which can be summarized with the following graph \cite{doukhan1995,bradley2001,dedecker2007}.

\begin{equation*}
	\psi\text{-mixing} 
	\begin{array}{c} \Rightarrow \vspace{-3pt} \\ \not\Leftarrow \end{array}
	\phi\text{-mixing}
	\begin{array}{c} \Rightarrow \vspace{-3pt} \\ \not\Leftarrow \end{array}
	\left\{
	\begin{array}{ccc}
		\beta\text{-mixing} & \begin{array}{c} \Rightarrow \vspace{-3pt} \\ \not\Leftarrow \end{array} & \alpha\text{-mixing} \vspace{-3pt} \\
		\not\Uparrow \not\Downarrow  &  & \vspace{-3pt} \\        
		\rho\text{-mixing}  & \begin{array}{c} \Rightarrow \vspace{-3pt} \\ \not\Leftarrow \end{array} & \alpha\text{-mixing}
	\end{array}
	\right.
\end{equation*}

It is also known that $\alpha$-mixing implies the cumulant condition under certain moment bounds \citep[see Lemma 2 in][]{bucher2020}. On the other hand, the connection between mixing conditions and the physical dependence measure has been studied less. One reason might be that mixing conditions are purely distributional properties, depending only on the distribution of a time series. In contrast, weak physical dependence requires
a specific representation of the time series, and as such, is a structural property. Due
to this difference, a direct comparison of the two notions of weak dependence is
difficult. Moreover, examples of weakly dependent time series exist, that are $\alpha$-mixing but not weakly physically dependent and vice versa. These examples provide crucial insights into conditions under which one measure of dependence is controlled by another.

\cite{hill2025} recently showed that weak physical dependence follows from the strong mixing condition. In the following, we specify conditions for the other direction.

% !TeX spellcheck = en_US
%% ====================
\section{Main Result} \label{sec:method}
%% ====================

First, we recall the framework of local stationarity, as proposed by \cite{zhou2009}. Let $\eps= (\eps_i)_{i\in\Z}$ be a sequence of i.i.d. random variables and $\eps^* = (\eps_i^*)_{i\in\Z}$ an independent copy of $\eps$. Define the filtrations $\Fc_i = (\eps_k)_{k \le i}$ and $\Fc_i^* = (\dots, \eps_{-2}, \eps_{-1}, \eps_{0}^*, \eps_1, \dots, \eps_i)$.

A triangular array $\{(X_{i,n})_{i=1, \dots, n}\}_{n\in \N}$ is \textit{locally stationary}, if there exists a possibly non-linear filter $G:[0, 1] \times \R^\N \to \R$, which is continuous in its first argument, such that $X_{i, n} = G(i/n, \Fc_i)$, for all $i \in \{1, \dots, n\}$ and $n\in \N$, and $(t, \omega) \mapsto G(t, \Fc_i(\omega))$ is measurable.

For $p \ge 1$, the \textit{physical dependence measure of order $p$} of a filter $G$ with $\sup_{t\in[0, 1]}\ex[|G(t, \Fc_i)|^p] < \infty$ is defined by
\begin{equation*}
	\delta_p(G, i) = \sup_{t\in[0, 1]} \ex\big[\big| G(t, \Fc_i) - G(t, \Fc_i^*) \big|^p\big]^{1/p},
\end{equation*}
for $i\in\Z$. A triangular array $X$ is \textit{weakly physically dependent} if $\delta_p(G, i)$ vanishes, as $i\to \infty$, for some order $p \ge 1$.

For two $\sigma$-algebras $\Fc$ and $\Gc$ on a probability space, their strong mixing (or $\alpha$-mixing) coefficient is defined as
\begin{equation*}
	\alpha(\Fc, \Gc) = \sup \big\{ |\pr(A\cap B) - \pr(A)\pr(B)|: A\in\Fc, B\in\Gc \big\}.
\end{equation*}
A triangular array $X = \{(X_{i,n})_{i=1, \dots, n}\}_{n\in \N}$ is \textit{strongly mixing} (or $\alpha$-mixing), if the strong mixing coefficients
\begin{equation*}
	\alpha(k) = \sup_{n \in \N} \sup_{j=1, \dots, n-k} \alpha\big(  \sigma\{(X_{i, n})_{i=1}^j\}, \sigma\{(X_{i, n})_{i=j+k}^n\} \big)
\end{equation*}
vanish as $k$ tends to infinity. The $\beta$-mixing coefficient is defined as
\begin{equation*}
	\beta(\Fc, \Gc) = d_{\mathrm{TV}}(\pr_{\Fc , \Gc}, \pr_\Fc \otimes \pr_\Gc).
\end{equation*}
where $d_{\mathrm{TV}}$ denotes the total variation distance between probability measures, $\pr_\Fc$ and $\pr_\Gc$ denote the restrictions of $\pr$ to the respective $\sigma$-algebras, and $\pr_{\Fc , \Gc}$ is a law on the product $\sigma$-algebra defined on rectangles by $\pr_{\Fc , \Gc}(A, B) = \pr(A \cap B)$. Note that we may equivalently write
\begin{equation}\label{eq:beta_alternative}
	\beta(\Fc, \Gc) = \ex[d_{\mathrm{TV}}(\pr_{\Fc|\Gc}, \pr_\Fc)],
\end{equation}
where $\pr_{\Fc|\Gc}$ denotes the restriction of $\pr$ to $\Fc$ conditioned on $\Gc$ (see, e.\,g., the proof of Theorem 1.4 by \cite{rio2017} or \cite{dedecker2005}).
As before, a triangular array $X = \{(X_{i,n})_{i=1, \dots, n}\}_{n\in \N}$ is $\beta$-\textit{mixing}, if the $\beta$-mixing coefficients
\begin{equation*}
	\beta(k) = \sup_{n \in \N} \sup_{j=1, \dots, n-k} \beta\big(  \sigma\{(X_{i, n})_{i=1}^j\}, \sigma\{(X_{i, n})_{i=j+k}^n\} \big)
\end{equation*}
vanish as $k$ tends to infinity. It is well known that $\alpha(k) \le \tfrac{1}{2} \beta(k)$, such that $\beta$-mixing implies strong mixing \citep[see, e.\,g., Proposition 1 in Section 1.1 of][]{doukhan1995}.

As discussed before, the notions of weak physical dependence and strong mixing are fundamentally different, since the first property concerns the structure of the time series and the second only its distribution.

Trivial examples of strong mixing time series, that are not weakly physically dependent, are sequences of i.i.d. random variables without existing moments, for example, if $X_i$ is Pareto distributed. In this case, the physical dependence measure is not defined. A less trivial example is the \textit{random walk in random scenery}.

\begin{example}
	Let $(\eps_i)_{i\in\Z}$ and $(\eta_i)_{i\in\Z}$ be independent sequences of i.i.d. Rademacher random variables, so that $\pr(\eps_i \pm 1) = \pr(\eta_i \pm 1) = \tfrac{1}{2}$, for $i\in\Z$. We first define the random positions
	\[
		S_0 = 0, \quad, S_n = \sum_{i=1}^{n} \eps_i, \quad S_{-n} = \sum_{i=0}^{n-1} \eps_{-i},
	\]
	for $n\in\N$. The time series $X =(X_i)_{i\in\Z}$, defined by $X_i = \eta_{S_i}$ is $\beta$-mixing by Theorem 2.5 (ii) of \cite{denHollander2006}, and as such, strong mixing. Moreover, $X$ has no causal representation by Theorem 2.2 of \cite{denHollander2006}, as required in the definition of the physical dependence measure. 
\end{example}

The previous example shows that the physical dependence measure is not defined for arbitrary (weakly dependent) time series with existing moments. For (locally stationary) time series that have the required representation, weak physical dependence of order $p$ follows from strong mixing, whenever the time series has uniformly bounded moments of order $r > p$, by Remark 2.4 of \cite{hill2025}.

\begin{example} \label{ex:non_mixing}
	Let $(\eps_i)_{i\in\Z}$ be a sequence of independent Bernoulli distributed random variables with success probability $p \in (0, 1)$, such that $\pr(\eps_i = 1) = 1 - \pr(\eps_i = 0) = p$, for $i\in\Z$, and $\rho \in (0, \tfrac{1}{2}]$. Then, the time series $(X_i)_{i\in\Z}$ defined by
	\[
		X_i = \sum_{k=0}^{\infty} \rho^k \eps_{i-k},
	\]
	is non-strong mixing by \cite{andrews1984}. Contrarily, for the physical dependence measure of the filter $G(x) = \sum_{k=0}^\infty \rho^k x_{i-k}$ and the filtration $\Fc_i = (\eps_k)_{k\le i}$, it holds
	\[
		\delta_2(G, h) = \big(\ex[|G(\Fc_h) - \Gc(\Fc_h^*)|^2]\big)^{1/2} 
		= \big(\ex[|\rho^h (\eps_0 - \eps_0^*)|^2]\big)^{1/2} 
		\le C \rho^h,
	\]
	which vanishes, as $h\to\infty$.
\end{example}

This example shows that even in the simple case of linear processes, we should not expect strong mixing to follow from weak physical dependence. In the specific case of linear processes, a sufficient condition for strong mixing is that the marginal densities of the innovations are of bounded variation \cite[see, e.\,g. Corollary 1 in Section 2.3 of][]{doukhan1995}. In order to derive $\beta$-mixing from physical dependence, we need a similar regularity condition, as specified below.

\begin{assumption} \label{assump:loc_stat}
	Let $X = \{(X_{i,n})_{i=1, \dots, n}\}_{n\in \N}$ be a centered locally stationary time series with representation $X_{i, n} = G(i/n, \Fc_i)$. 
	$X$ is weakly physically dependent of order $1$, i.\,e., $\Theta_1 < \infty$, where $\Theta_k = \sum_{h=k}^{\infty} \delta_1(G, h)$ for $k\in \N$.
\end{assumption}

\begin{assumption} \label{assump:density}
	Let $(\Omega, \Ac, \pr)$ denote a probability space, $X = \{(X_{i,n})_{i=1, \dots, n}\}_{n\in \N}$ be a centered locally stationary time series, and $(w_m)_{m\in\N}$ be positive weights with $\sum_{m=1}^\infty w_m \le 1$. 
	\begin{enumerate}
		\item The marginal density $p_{j+k, n}$ of $(X_{j+k,n},\dots,X_{n,n})$ with respect to the Lebesgue measure has partial derivatives in $L^1(\R^{n-j-k+1})$, such that
		\begin{equation*}
			\sum_{m=1}^{n-j-k+1} w_m \|\partial_m p_{j+k, n}\|_1 \le D_1,
		\end{equation*}
		for some constant $D_1\ge 0$ and any $j \in \{1, \dots, n\}, k\in\{1, \dots, n-j\}$ and $n \in \N$.
		\item The conditional probability measure of the vector $(X_{j+k,n},\dots,X_{n,n})$ given $(X_{1,n},\dots,X_{j,n})$ has density $q_{j, k, n}$ with respect to the Lebesgue measure, such that partial derivatives are in $L^1(\R^{n-j-k+1})$ and
		\begin{equation*}
			\sum_{m=1}^{n-j-k+1} w_m \|\partial_m q_{j, k, n}\|_1 \le D_2
		\end{equation*}
		$\pr$-a.e., some constant $D_2\ge 0$ and any $j \in \{1, \dots, n\}, k\in\{1, \dots, n-j\}$ and $n \in \N$.
	\end{enumerate}
\end{assumption}

\begin{theorem}\label{thm:main}
	Let $X$ be a triangular array satisfying Assumptions \ref{assump:loc_stat} and \ref{assump:density}. There exists an absolute constant $C\ge 0$, such that for every $k\in\N$
	\begin{equation*}
		\beta(k)\le C\sqrt{D \Theta_k},
	\end{equation*}
	where $D = \max\{D_1, D_2\}$.
\end{theorem}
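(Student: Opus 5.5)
We bound $\beta$ through the conditional representation \eqref{eq:beta_alternative}: for fixed $n$, $j$, $k$, set $\Fc = \sigma\{(X_{i,n})_{i=j+k}^n\}$ (the future block) and $\Gc = \sigma\{(X_{i,n})_{i=1}^j\}$ (the past block). Then
\[
\beta(\Gc,\Fc) = \ex\big[ d_{\mathrm{TV}}(\pr_{\Fc|\Gc}, \pr_\Fc)\big] = \tfrac12\, \ex\Big[\int |q_{j,k,n}(y) - p_{j+k,n}(y)|\diff y\Big],
\]
and the goal is to control the $L^1$ distance between the conditional and marginal densities of the future vector $Y=(X_{j+k,n},\dots,X_{n,n})$. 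The plan is to couple $Y$ with a copy $\tilde Y$ that is independent of $\Gc$ and close to $Y$ in $L^1$, using the physical dependence structure.

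\textbf{Coupling step.} Replace the innovations $(\eps_l)_{l\le j}$ by an independent copy $(\eps^*_l)_{l\le j}$ and set $\tilde X_{i,n} = G(i/n, \tilde\Fc_i)$ for $i \ge j+k$. Then $\tilde Y$ is independent of $\Fc_j \supseteq \Gc$ and has the same law as $Y$, with density $p_{j+k,n}$. Replacing the innovations one at a time gives a telescoping bound; with stationarity of the innovations,
\[
\ex|X_{i,n}-\tilde X_{i,n}| \le \sum_{h\ge i-j}\delta_1(G,h).
\]
Summing against weights is where the $w_m$ enter. I would rather measure the perturbation coordinatewise, since only the weighted sum $\sum_m w_m \ex|Y_m-\tilde Y_m|$ is needed below. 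Each term is at most $\Theta_k$ because $i-j\ge k$, and $\sum_m w_m\le 1$, so this sum is at most $\Theta_k$.

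\textbf{Smoothing step (main obstacle).} The task is to convert an $L^1$ distance between random vectors into an $L^1$ distance between densities; this needs the derivative bounds in Assumption \ref{assump:density}. I would use the dual formulation: $d_{\mathrm{TV}}$ is the supremum over $f$ with $|f|\le 1$ of $\ex[f(Y)\mid\Gc]-\ex f(\tilde Y)$. Mollify $f$ anisotropically, $f_\eta = f * \phi_\eta$, with bandwidth $\eta/w_m$ in coordinate $m$, so that $f_\eta$ is Lipschitz with weighted constant $\lesssim 1/\eta$, i.e. $|f_\eta(y)-f_\eta(y')|\lesssim \eta^{-1}\sum_m w_m|y_m-y'_m|$. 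Three error terms then appear:
\begin{itemize}
\item The Lipschitz term, $\ex|f_\eta(Y)-f_\eta(\tilde Y)| \lesssim \Theta_k/\eta$. Since $\tilde Y$ is independent of $\Gc$, averaging over $\Gc$ gives exactly the needed expectation.
\item The smoothing error under the marginal density, $|\int (f-f_\eta)p_{j+k,n}| \le \int |p - p*\phi_\eta| \lesssim \eta\sum_m w_m\|\partial_m p\|_1\le \eta D_1$.
\item The analogous smoothing error under $q_{j,k,n}$, bounded by $\eta D_2$ almost surely.
\end{itemize}
Getting these bounds from the weighted derivative norms uses $\|g(\cdot - s e_m) - g\|_1\le |s|\,\|\partial_m g\|_1$ together with a product mollifier.

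\textbf{Conclusion.} Because $f$ is arbitrary inside an expectation over $\Gc$, I take $f$ to be the sign of $q-p$, which is measurable in $\Gc$ and $y$. This yields $\beta \lesssim \eta D + \Theta_k/\eta$. Optimizing with $\eta=\sqrt{\Theta_k/D}$ gives $C\sqrt{D\Theta_k}$, uniformly in $n$ and $j$; taking suprema proves Theorem \ref{thm:main}.

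The delicate point is making the dimension-free anisotropic mollification rigorous. I would choose $\phi_\eta$ as a product of one-dimensional kernels with scales $\eta/w_m$, so that both the Lipschitz bound and the smoothing bound carry the same weights $w_m$, with constants independent of the dimension $n-j-k+1$.
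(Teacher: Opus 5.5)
\textbf{The smoothing step fails.} The problem sits exactly where you claim that bandwidths $\eta/w_m$ make both bounds carry the weights $w_m$. Take the product kernel $\phi_\eta(y)=\prod_m h_m^{-1}\phi(y_m/h_m)$ with bandwidth $h_m$ in coordinate $m$. Then $\|\partial_m\phi_\eta\|_1=C_{\phi,1}/h_m$ and $\int|y_m|\phi_\eta(y)\diff y=C_{\phi,2}h_m$. So the Lipschitz constant of $f_\eta$ in coordinate $m$ is of order $1/h_m$, while the smoothing error is $C_{\phi,2}\sum_m h_m\|\partial_m p_{j+k,n}\|_1$.

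With $h_m=\eta/w_m$ your Lipschitz bound $\lesssim\eta^{-1}\sum_m w_m|y_m-y_m'|$ is correct. The smoothing error, however, is $C_{\phi,2}\,\eta\sum_m\|\partial_m p_{j+k,n}\|_1/w_m$, not $\eta\sum_m w_m\|\partial_m p_{j+k,n}\|_1\le\eta D_1$; the same holds for $q_{j,k,n}$. Assumption \ref{assump:density} gives no control of this quantity. Since $\sum_m w_m\le 1$, Cauchy--Schwarz gives $\sum_{m\le\nu}1/w_m\ge\nu^2$. Already for a stationary Gaussian AR(1), $\|\partial_m p_{j+k,n}\|_1$ is of constant order for every interior $m$. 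There the term grows like $\nu^2$ in the block length $\nu=n-j-k+1$, and the resulting bound becomes vacuous after the supremum over $n$.

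\textbf{No choice of bandwidths repairs this.} The two per-coordinate factors multiply to $C_{\phi,1}C_{\phi,2}$ whatever $h_m$ is, so weights $w_m$ on one term force weights $1/w_m$ on the other. Suppose you instead match Assumption \ref{assump:density} with $h_m=\eta w_m$. Then the Lipschitz term becomes $\eta^{-1}\sum_m\ex|Y_m-\tilde Y_m|/w_m\le\eta^{-1}\sum_{m\ge1}\Theta_{k+m-1}/w_m$. Your scheme then proves only $\beta(k)\lesssim\big(D\sum_{m\ge1}\Theta_{k+m-1}/w_m\big)^{1/2}$, not $\sqrt{D\Theta_k}$.

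The paper's proof does not close this either. It uses bandwidth $\eps w_m$, so Proposition \ref{prop:smooth-error} correctly carries $w_m$. But Proposition \ref{prop:mollified_W1} computes $\|\partial_m\Phi_\eps^{(w)}\|_1=C_{\phi,1}/(\eps w_m)$ and then bounds $\int|\Phi_\eps^{(w)}(x-y)-\Phi_\eps^{(w)}(x-z)|\diff x$ by $\tfrac{C_{\phi,1}}{\eps}\sum_m w_m|y_m-z_m|$. That computation only gives $\tfrac{C_{\phi,1}}{\eps}\sum_m|y_m-z_m|/w_m$, which is the mirror image of your slip.

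Apart from this point your argument is sound. The coupling coincides with the paper's. Fixing a deterministic $\eta$ and choosing the $\Gc$-measurable test function $\mathrm{sign}(q_{j,k,n}-p_{j+k,n})$ after conditioning is a clean substitute for the paper's route. That route optimises $\eps$ per realization and then uses Kantorovich--Rubinstein duality and Jensen's inequality.
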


A direct consequence of the theorem is that strong mixing follows from weak physical dependence. This allows an application of statistical methods, that are based on the strong mixing condition, such as tests for serial correlation or outliers \citep{bucher2023,heinrichs2025}, to weakly physically dependent time series.

\begin{remark}
	Assumption \ref{assump:loc_stat} specifies that $X$ must have the required structure for the physical dependence measure to exist. It is rather mild, since most time series of practical interest possess this structure. Moreover, we do not use local stationarity in the proof of Theorem \ref{thm:main}, so that the result remains valid for arbitrary triangular arrays, as long as the physical dependence measure exists and is bounded as in the assumption.
	
	Assumption \ref{assump:density} specifies the required regularity of the marginal densities. As we have seen in Example \ref{ex:non_mixing}, the theorem is false for general innovations $\eps$.
\end{remark}

% !TeX spellcheck = en_US
%% ====================
\section{Auxiliary Results} \label{sec:aux}
%% ====================

In the following, let $\| f\|_1 = \int |f(x)| \diff x$ denote the $L^1$-norm for functions from $\R^m \to \R$ and $m\in\N$. Moreover, let $W_{1,d}$ denote the Wasserstein $1$-distance with respect to some metric $d$.

Before proving Theorem \ref{thm:main}, we first need some auxiliary results to bound the $L^1$-distance of density functions in terms of the Wasserstein distance of corresponding probability distributions.

Let $\phi\in C_c^\infty(\R)$ be a symmetric, smooth function on $\R$ with compact support, $\int\phi(x)\diff x =1, C_{\phi, 1} =\int |\phi'(x)|\diff x < \infty, C_{\phi, 2} = \int |x| \phi(x) \diff x < \infty$ and $\lim_{\eps \to 0} \tfrac{1}{\eps} \phi(x / \eps) = \delta(x)$, where $\delta$ denotes the Dirac delta. 

For $\eps > 0$, define $\eps_m =\eps w_m$ and the product mollifier on $\R^\nu$ as
\begin{equation*}
	\Phi_\eps^{(w)}(x)
	=\prod_{m=1}^\nu \frac{1}{\eps_m} \phi \big(\frac{x_m}{\eps_m}\big)
	=\prod_{m=1}^\nu \frac{1}{\eps w_m} \phi \big(\frac{x_m}{\eps w_m}\big).
\end{equation*}

For any integrable density $f$ define the mollification $f_\eps= f*\Phi_\eps^{(w)}$, where $*$ denotes the convolution. The product structure allows coordinatewise estimates. The fact $\lim_{\eps \to 0}f_\eps = f$ is a standard result in functional analysis, see, e.\,g., Chapter 5.3 of \cite{evans1998}. However, we need explicit control over the distance between $f_\eps$ and $f$, which is guaranteed by the following proposition.

\begin{proposition}[Smoothing error]\label{prop:smooth-error}
	For any density $f$ on $\R^\nu$ with partial derivatives $\partial_m f\in L^1$,
	\[
	\|f-f_\eps\|_1 \le % \sum_{m=1}^\nu \eps_m\|\partial_m f\|_1 = 
	C_{\phi, 2} \eps \sum_{m=1}^\nu w_m \|\partial_m f\|_1.
	\]
\end{proposition}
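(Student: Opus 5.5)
The plan is to prove the bound one coordinate direction at a time, using the product structure of $\Phi_\eps^{(w)}$, and then sum the coordinate contributions with the triangle inequality. First I will reduce to smooth $f$. If $f\in C_c^\infty(\R^\nu)$, or more generally if $f$ lies in the Sobolev space $W^{1,1}$, the estimate follows from a first-order Taylor expansion. For a general density with $\partial_m f\in L^1$, I then pass to the limit along standard mollifications $f^{(\eta)}\to f$ in $L^1$ with $\|\partial_m f^{(\eta)}\|_1\le\|\partial_m f\|_1$. Convolution commutes with derivatives and is an $L^1$-contraction, so both sides of the inequality behave well in this limit.

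Next I telescope. Write $\Phi_\eps^{(w)}=\phi_1\otimes\cdots\otimes\phi_\nu$ with $\phi_m(x)=\eps_m^{-1}\phi(x/\eps_m)$, and set $g_0=f$ and $g_m=g_{m-1}*_m\phi_m$, where $*_m$ denotes one-dimensional convolution in the $m$-th coordinate. Then $g_\nu=f_\eps$ and
\[
\|f-f_\eps\|_1\le\sum_{m=1}^\nu\|g_{m-1}-g_{m-1}*_m\phi_m\|_1 .
\]
Since convolution in the other coordinates commutes with $\partial_m$ and does not increase the $L^1$ norm, we have $\|\partial_m g_{m-1}\|_1\le\|\partial_m f\|_1$.

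For each term I use the fundamental theorem of calculus along $e_m$. Because $\int\phi_m=1$,
\[
g(x)-(g*_m\phi_m)(x)=\int\phi_m(y)\,\big(g(x)-g(x-ye_m)\big)\diff y
=\int\phi_m(y)\int_0^1 y\,\partial_m g(x-sye_m)\diff s\diff y .
\]
Taking absolute values, integrating over $x$, and applying Fubini and translation invariance gives
\[
\|g-g*_m\phi_m\|_1\le\|\partial_m g\|_1\int|y|\,|\phi_m(y)|\diff y .
\]
The substitution $y=\eps_m u$ turns the last integral into $\eps_m\int|u|\,|\phi(u)|\diff u=\eps_m C_{\phi,2}$. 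Here I am taking $\phi\ge0$, which is the implicit reading of the definition of $C_{\phi,2}$; otherwise one replaces $\phi$ by $|\phi|$ in that constant. Summing over $m$ and using $\eps_m=\eps w_m$ yields the claimed bound.

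The main obstacle is the justification of the fundamental theorem of calculus along lines when $f$ is only assumed to have $L^1$ partial derivatives. I would handle this by the density argument described above: prove the inequality for smooth $f$, where each step is elementary, and then conclude by $L^1$ continuity. No other step seems delicate.
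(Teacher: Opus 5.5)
Your proof is correct, but it is organized differently from the paper's. The paper does not telescope over coordinates. It writes $f(x)-f_\eps(x)=\int (f(x)-f(x-y))\Phi_\eps^{(w)}(y)\diff y$ and applies the gradient theorem once along the segment from $x-y$ to $x$. Fubini and translation invariance then give $\sum_m\|\partial_m f\|_1\int|y_m|\Phi_\eps^{(w)}(y)\diff y$ directly. The product structure is used only to evaluate that first absolute moment as $\eps w_m C_{\phi,2}$.

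Your route through the partially mollified functions $g_m$ needs one extra observation: convolution in the other coordinates commutes with $\partial_m$ and does not increase $L^1$ norms. In exchange, you only use the one-dimensional fundamental theorem of calculus along coordinate lines, which is exactly what the hypothesis on the partial derivatives $\partial_m f$ controls. You also make explicit the density argument that justifies this step for non-smooth $f$, which the paper takes for granted.

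One minor correction concerns your parenthetical claim that for sign-changing $\phi$ it suffices to replace $\phi$ by $|\phi|$ in $C_{\phi,2}$. This is not right: the contraction step $\|\partial_m g_{m-1}\|_1\le\|\partial_m f\|_1$ uses $\|\phi\|_1=1$, and without $\phi\ge0$ it would cost a factor $\|\phi\|_1^{m-1}$. The paper's proof needs $\phi\ge0$ in the same way, to bring absolute values inside the integral against $\Phi_\eps^{(w)}$. So $\phi\ge0$ is the correct standing assumption.
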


\begin{proof}
	For each $x\in\R^\nu$, it holds
	\begin{equation*}
		f(x) - f_\eps(x) = \int_{\R^\nu} \big(f(x) - f(x-y)\big) \Phi_\eps^{(w)}(y) \diff y.
	\end{equation*}
	By the gradient theorem, i.\,e., the fundamental theorem of calculus applied to the line segment from $x - y$ to $x$, it holds
	\begin{equation*}
		f(x) - f(x - y) = \int_0^1 \nabla f(x - ty) \cdot y \diff t = \sum_{m=1}^\nu \int_0^1 \partial_m f(x - ty) y_m \diff t,
	\end{equation*}
	and in particular, by Fubini's theorem and translation invariance of the Lebesgue measure,
	\begin{align*}
		\| f - f_\eps \|_1 
		& \le \int_{\R^\nu} \int_0^1 \sum_{m=1}^\nu \| \partial f(\cdot - ty)\|_1 |y_m| \diff t \Phi_\eps^{(w)} (y) \diff y \\
		& = \sum_{m=1}^\nu \| \partial_m f \|_1 \int_0^1 \int_{\R^\nu} |y_m| \Phi_\eps^{(w)} (y) \diff y \diff t.
	\end{align*}
	Now, by the definition of $\Phi_\eps^{(w)}$, we have
	\begin{equation*}
		\int_{\R^\nu} |y_m| \Phi_\eps^{(w)}(y) \diff y
		= \int_\R |x| \frac{1}{\eps w_m} \phi\big(\frac{x}{\eps w_m}\big) \diff x
		= \eps w_m \int_\R |z| \phi(z) \diff z,
	\end{equation*}
	where we used the fact, that the integral over each coordinate $y_\ell$ integrates to $1$, for $\ell \neq m$. In particular,
	\begin{align*}
		\| f - f_\eps \|_1 
		\le \eps \bigg( \int_\R |z| \phi(z) \diff z\bigg) \sum_{m=1}^\nu w_m \| \partial_m f \|_1 \le C_{\phi, 2} \eps \sum_{m=1}^\nu w_m \| \partial_m f \|_1.
	\end{align*}
\end{proof}

Next we estimate the $L^1$-difference of two mollified densities in terms of the \(W_{1,d}\)-distance of the underlying measures.

\begin{proposition}[Mollified difference in $W_{1,d}$] \label{prop:mollified_W1}
	Let $p$ and $q$ be two probability densities on $\R^\nu$ with corresponding laws $P$ and $Q$. Then for every $\eps > 0$,
	
	\begin{equation*}
		\|p_\eps - q_\eps\|_1 \le \frac{C_{\phi, 1}}{\eps}\, W_{1,d}(P,Q),
	\end{equation*}
	where $d$ is defined by
	\begin{equation*}
		d(y, z) = \sum_{m=1}^\nu w_m |y_m-z_m| 
	\end{equation*}
	with weights as in Assumption \ref{assump:density}.
\end{proposition}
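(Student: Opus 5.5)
The plan is a coupling argument: write the difference of the mollified densities as an average of differences of translated mollifiers, and bound each of those by the mean value theorem, one coordinate at a time.

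\textbf{Step 1 (coupling representation).} Let $\pi$ be any coupling of $P$ and $Q$ on $\R^\nu\times\R^\nu$; an optimal coupling for $W_{1,d}$ exists, since $d$ is a lower semicontinuous metric. Because $p_\eps(x)=\int \Phi_\eps^{(w)}(x-y)\,P(\diff y)$ and similarly for $q_\eps$, we get
\begin{equation*}
	p_\eps(x)-q_\eps(x)=\int \big(\Phi_\eps^{(w)}(x-y)-\Phi_\eps^{(w)}(x-z)\big)\,\pi(\diff y,\diff z).
\end{equation*}
Integrating $|\cdot|$ in $x$ and applying Fubini gives
\begin{equation*}
	\|p_\eps-q_\eps\|_1\le\int \big\|\Phi_\eps^{(w)}(\cdot-y)-\Phi_\eps^{(w)}(\cdot-z)\big\|_1\,\pi(\diff y,\diff z).
\end{equation*}

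\textbf{Step 2 (coordinatewise telescoping).} Move from $y$ to $z$ one coordinate at a time through the intermediate points $u^{(m)}=(z_1,\dots,z_m,y_{m+1},\dots,y_\nu)$. By the product structure of $\Phi_\eps^{(w)}$, the $m$-th increment factorises. The $\ell\neq m$ factors each have $L^1$-norm $1$. The $m$-th factor is bounded by the one-dimensional estimate
\begin{equation*}
	\Big\|\tfrac1{\eps_m}\phi\big(\tfrac{\cdot-a}{\eps_m}\big)-\tfrac1{\eps_m}\phi\big(\tfrac{\cdot-b}{\eps_m}\big)\Big\|_1\le |a-b|\,\Big\|\tfrac{1}{\eps_m^2}\phi'\big(\tfrac{\cdot}{\eps_m}\big)\Big\|_1=\frac{C_{\phi,1}}{\eps_m}|a-b|,
\end{equation*}
which follows from the fundamental theorem of calculus exactly as in Proposition~\ref{prop:smooth-error}. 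Summing over $m$ yields
\begin{equation*}
	\big\|\Phi_\eps^{(w)}(\cdot-y)-\Phi_\eps^{(w)}(\cdot-z)\big\|_1\le C_{\phi,1}\sum_{m}\frac{|y_m-z_m|}{\eps_m}.
\end{equation*}

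\textbf{Step 3 (identify the metric and conclude).} It remains to compare the per-coordinate factor $1/\eps_m$ with $w_m/\eps$, so that the integrand is bounded by $\tfrac{C_{\phi,1}}{\eps}\,d(y,z)$. Integrating against the optimal $\pi$ then gives the claimed bound $\tfrac{C_{\phi,1}}{\eps}W_{1,d}(P,Q)$.

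\textbf{Main obstacle.} Step 3 is where I expect the difficulty. With the paper's scaling $\eps_m=\eps w_m$, the factor is $1/(\eps w_m)$, not $w_m/\eps$. Since $\sum_m w_m\le 1$ forces $w_m\le 1$, we have $1/w_m\ge w_m$, so this comparison goes the wrong way. The stated weighted form therefore does not follow from Step 2 alone. What I would try first is to check whether the one-dimensional bound in Step 2 can be sharpened enough to absorb this factor, for instance by combining it with the trivial bound $\|\cdot\|_1\le 2$ for each coordinate increment. I doubt this suffices for general weights, so this step must be examined carefully before the constant $C_{\phi,1}/\eps$ can be asserted with the metric $d$ exactly as stated.
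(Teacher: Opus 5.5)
Your Steps 1 and 2 are correct and coincide with the paper's proof. The paper also takes an arbitrary coupling and uses Fubini. It bounds $\int|\Phi_\eps^{(w)}(x-y)-\Phi_\eps^{(w)}(x-z)|\diff x$ by $\sum_m\|\partial_m\Phi_\eps^{(w)}\|_1|y_m-z_m|$, via the gradient theorem on the segment rather than your coordinatewise telescoping, with the same result. It computes $\|\partial_m\Phi_\eps^{(w)}\|_1=C_{\phi,1}/(\eps w_m)$ and then passes ``in particular'' to $\tfrac{C_{\phi,1}}{\eps}d(y,z)$. That is exactly the comparison $1/w_m\le w_m$ you refuse to make.

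Your objection is correct, and no sharper estimate can close the gap, because the proposition as stated is false. Since the weights are positive with $\sum_m w_m\le1$, we have $w_1\le 1-w_2<1$. Let $p$ be a smooth probability density supported in the ball of radius $\delta$, and let $q=p(\cdot-he_1)$, where $e_1$ is the first unit vector.
\begin{itemize}
\item Coupling $Y\sim P$ with $Y+he_1\sim Q$ gives $W_{1,d}(P,Q)\le w_1|h|$.
\item Since $q_\eps=p_\eps(\cdot-he_1)$, continuity of translation in $L^1$ gives $\|p_\eps-q_\eps\|_1/|h|\to\|\partial_1 p_\eps\|_1=\|p*\partial_1\Phi_\eps^{(w)}\|_1$ as $h\to0$.
\item As $\delta\to0$, this tends to $\|\partial_1\Phi_\eps^{(w)}\|_1=C_{\phi,1}/(\eps w_1)$.
\end{itemize}
The claimed inequality would therefore force $C_{\phi,1}/(\eps w_1)\le C_{\phi,1}w_1/\eps$, i.e.\ $w_1\ge1$, which is a contradiction. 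The violation occurs as $W_{1,d}(P,Q)\to0$, where your first-order bound from Step 2 is asymptotically sharp. So combining it with the trivial bound $\|\cdot\|_1\le2$ cannot help.

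What Steps 1--2 genuinely prove is the inequality with $d$ replaced by $d'(y,z)=\sum_m|y_m-z_m|/w_m$. You could instead rescale to $\eps_m=\eps/w_m$, which yields the stated metric $d$. But then Proposition~\ref{prop:smooth-error} produces $C_{\phi,2}\eps\sum_m\|\partial_m f\|_1/w_m$, which Assumption~\ref{assump:density} does not control. The two propositions need reciprocal scalings, so \eqref{eq:interpolation} is only justified with $W_{1,d'}$ in place of $W_{1,d}$. Using it in the proof of Theorem~\ref{thm:main} would then require a bound on a weighted sum such as $\sum_m\Theta_{k+m-1}/w_m$ rather than on $\Theta_k$. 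Your decision to stop at Step 3 rather than assert the constant was the right one.
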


\begin{proof}
	Let $\pi$ be any coupling of $P$ and $Q$. Then,
	\begin{align*}
		\| p_\eps - q_\eps \|_1
		& = \int_{\R^\nu} \iint  \Phi_\eps^{(w)}(x-y)-\Phi_\eps^{(w)}(x-z) \pi(dy,dz) dx \\
		& \le \iint \int_{\R^\nu} \big|\Phi_\eps^{(w)}(x-y)-\Phi_\eps^{(w)}(x-z)\big| dx \pi(dy,dz),
	\end{align*}
	by Fubini's theorem. As in the proof of Proposition \ref{prop:smooth-error}, we have by the gradient theorem and translation invariance of the Lebesgue measure, that 
	\begin{equation*}
		\int_{\R^\nu} \big|\Phi_\eps^{(w)}(x-y)-\Phi_\eps^{(w)}(x-z)\big| dx
		\le \sum_{m=1}^\nu \|\partial_m \Phi_\varepsilon^{(w)}\|_1 |y_m-z_m|.
	\end{equation*}
	By the definition of $\Phi_\eps^{(w)}$, 
	\begin{align*}
		\|\partial_m \Phi_\varepsilon^{(w)}\|_1
		& = \int_{\R^\nu} \frac{1}{(\eps w_m)^2} \phi'\big(\frac{y_m}{\eps w_m}\big) \prod_{\ell\neq m} \frac{1}{\eps w_\ell} \phi\big(\frac{y_\ell}{\eps w_\ell}\big) \diff y \\
		& = \int_\R \frac{1}{(\eps w_m)^2} \phi'\big(\frac{x}{\eps w_m}\big) \diff x
		= \frac{C_{\phi, 1}}{\eps w_m},
	\end{align*}
	since the integrals with respect to $y_\ell$, for $\ell \neq m$, integrate to $1$. In particular, 
	\begin{equation*}
		\int_{\R^\nu} \big|\Phi_\eps^{(w)}(x-y)-\Phi_\eps^{(w)}(x-z)\big| dx \le \frac{C_{\phi, 1}}{\eps} d(y, z).
	\end{equation*}
	Integration with respect to $\pi$ and minimizing over all couplings $\pi$ yields the claim.
\end{proof}

Combining propositions \ref{prop:smooth-error} and \ref{prop:mollified_W1} yields
\begin{equation*}
	\| p - q \|_1 \le 2C_{\phi, 2}\eps D + \frac{C_{\phi, 1}}{\eps} W_{1,d}(P,Q),
\end{equation*}
for any $\eps > 0$, where we used Assumption \ref{assump:density} to bound the weighted derivative sums by $D$. The right-hand side is minimized by $\eps = \sqrt{\tfrac{C_{\phi, 1} W_{1,d}(P,Q)}{2D C_{\phi, 2}}}$, and yields the bound
\begin{equation}\label{eq:interpolation}
	\|p-q\|_1^2 \le C D W_{1,d}(P,Q),
\end{equation}
for some constant $C \ge 0$, depending only on $\phi$.

%% ====================
\section{Proof of Theorem \ref{thm:main}} \label{sec:proof}
%% ====================

Consider some arbitrary, but fixed, $j \in \{1, \dots, n\}, k\in\{1, \dots, n-j\}$ and $n \in \N$. Let $\nu = n - j - k + 1$ and define a weighted $\ell^1$ metric on $\R^\nu$ by
\begin{equation*}
	d(x,y) = \sum_{m=1}^\nu w_m |x_m-y_m|,
\end{equation*}
for $x, y \in \R^\nu$. Moreover, let
\begin{equation*}
	\Fc_i^{(k, \ell)} = (\dots, \eps_{i-\ell - k-2}, \eps_{i-\ell-k-1}, \eps_{i-\ell-k}^*, \dots, \eps_{i-\ell}^*, \eps_{i-\ell+1}, \dots, \eps_i),
\end{equation*}
for $\ell, k\in \N_0$ and $i\in \Z$, and $\Fc_i^{(-1, \ell)} = \Fc_i$. Note that \begin{equation*}
	\Fc_{j+i}^{(\infty, i)} = (\dots, \eps_{j-1}^*, \eps_j^*, \eps_{j+1}, \dots, \eps_{j+i}), 
\end{equation*}
for any $i \in \N$. Define $\tilde{X}_{i, n} = G(i/n, \Fc_{j + i}^{(\infty, i)})$, for $i \ge k$. 
Let $\Xb = (X_{j+k,n},\dots,X_{n,n})$ and $\tilde{\Xb} = (\tilde{X}_{j+k,n},\dots,\tilde{X}_{n,n})$. 

Using a telescoping sum argument, we can rewrite
\begin{equation*}
	G(t, \Fc_{j + i}) - G(t, \Fc_{j + i}^{(\infty, i)}) = \sum_{\ell=0}^{\infty} G(t, \Fc_{j + i}^{(\ell-1, i)}) - G(t, \Fc_{j + i}^{(\ell, i)}),
\end{equation*}
since $\Fc_{j + i} = \Fc_{j + i}^{(-1, i)}$. By the triangle inequality, it holds 
\begin{align*}
	\ex[|G(t, \Fc_{j + i}) - G(t, \Fc_{j + i}^{(\infty, i)})|] 
	& \le \sum_{\ell=0}^{\infty} \ex[| G(t, \Fc_{j + i}^{(\ell-1, i)}) - G(t, \Fc_{j + i}^{(\ell, i)})|]  \\
	& \le \sum_{\ell=0}^{\infty} \delta_1(G, i + \ell) = \Theta_i,
\end{align*}
by Assumption \ref{assump:loc_stat}. In particular, $\ex[|X_{j+i,n} - \tilde{X}_{j+i,n}|] \le \Theta_i$, for any $i \ge k$. By definition of the metric $d$, it follows 
\begin{equation*}
	\ex[d(\Xb, \tilde{\Xb})] = \sum_{m=1}^\nu w_m \ex[|X_{j+m,n} - \tilde{X}_{j+m,n}|] \le \Theta_k.
\end{equation*}
Moreover, $(\Xb, \tilde{\Xb})$ is a coupling, where $\Xb$ and $\tilde{\Xb}$ have the same marginal distribution and $\tilde{\Xb}$ is independent of $\Gc_j := (X_{1, n}, \dots, X_{j, n})$. Let $P_{j+k, n}$ denote the distribution of $\Xb$ and $Q_{j, k, n}$ the conditional distribution of $\Xb$ given $\Gc_j$. By the Kantorovich-Rubinstein duality, for each realization of $Q_{j, k, n}$, we can express the Wasserstein distance as
\begin{align*}
	W_{1, d}(P_{j+k, n}, Q_{j, k, n}) 
	% & = \sup_{\|f\|_L \le 1} \bigg| \int f \diff P_{j+k, n} - \int f \diff Q \bigg| \\
	& = \sup_{\|f\|_L \le 1} \big| \ex[f(\Xb)] - \ex[f(\Xb) | \Gc_j] \big|, \quad \pr-\mathrm{a.e.}
\end{align*}
where the supremum is taken over all Lipschitz continuous functions with Lipschitz constant less than or equal to $1$. By construction, $\tilde{\Xb}$ is independent of $\Gc_j$ and $\tilde{\Xb}$ has distribution $P_{j+k, n}$, such that $\ex[f(\tilde{\Xb}) | \Gc_j] = \ex[f(\tilde{\Xb})] = \ex[f(\Xb)]$. In particular,
\begin{equation*}
	|\ex[f(\Xb)] - \ex[f(\Xb) | \Gc_j] |
	= |\ex[f(\tilde{\Xb}) | \Gc_j] - \ex[f(\Xb) | \Gc_j]|
	\le \ex\big[|f(\tilde{\Xb}) - f(\Xb)| \big| \Gc_j\big].
\end{equation*}
Taking the supremum over all Lipschitz continuous functions with Lipschitz constant bounded by $1$, yields $W_{1, d}(P_{j+k, n}, Q_{j, k, n}) \le \ex[d(\Xb, \tilde{\Xb}) | \Gc_j]$.
By taking the expectation, we finally have 
\begin{equation*}
	\ex[W_{1, d}(P_{j+k, n}, Q_{j, k, n})] \le \ex[d(\Xb, \tilde{\Xb})] \le \Theta_k.
\end{equation*}
Note that $p_{j+k,n}$ denotes the density of $P_{j+k, n}$ and $q_{j, k, n}$ the density of the conditional law $Q_{j, k, n}$. By Scheffé's theorem \citep[see, e.\,g., Lemma 2.1 in][]{tsybakov2009}, it holds $2 d_{\mathrm{TV}}(P_{j+k, n}, Q_{j, k, n}) = \|  p_{j+k,n}- q_{j, k, n} \|_1$ ($\pr$-a.e.). In particular, by Jensen's inequality and \eqref{eq:interpolation}, 
\begin{align*}
	2 \ex[ d_{\mathrm{TV}}(P_{j+k, n}, Q_{j, k, n})] % & = \ex[\|  p_{j+k,n}- q_{j, k, n} \|_1] \\
	& \le \ex[\|  p_{j+k,n}- q_{j, k, n} \|_1^2]^{1/2} \\
	& \le \sqrt{C D \ex[W_{1, d}(P_{j+k, n}, Q_{j, k, n})]}
	\le \sqrt{C D \Theta_k}.
\end{align*}
Finally, by \eqref{eq:beta_alternative}, we have
\begin{equation*}
	\beta(k) = \sup_{n \in \N} \sup_{j=1, \dots, n-k} \ex[ d_{\mathrm{TV}}(P_{j+k, n}, Q_{j, k, n})] \le \frac{\sqrt{C}}{2} \sqrt{D \Theta_k}.
\end{equation*}
\qed

\bibliographystyle{elsarticle-harv} 
\bibliography{bibliography}

\end{document}